\newtheorem{thm}{Theorem}[section]
\newtheorem{cor}[thm]{Corollary}
\newtheorem{lem}[thm]{Lemma}
\theoremstyle{definition}
\newtheorem{rem}[thm]{Remark}
\newcommand{\R}{\mathbb R}
\newcommand{\N}{\mathbb{N}}
\newcommand{\K}{\mathbb{K}}
\DeclareMathOperator{\rank}{rank}
\title{The Kalman condition for the boundary controllability of coupled 1-d wave equations.}
\email{saavdonin@alaska.edu, ldeteresa@im.unam.mx}
\begin{document}
\renewcommand\abstractname{\textbf{Abstract}}
\maketitle

\begin{center}
S. Avdonin\textsuperscript{a,}, L. de Teresa\textsuperscript{b,}\\
\vspace{0.1in}
\begin{footnotesize}
\emph{\textsuperscript{a}University of Alaska Fairbanks}\\
\emph{\textsuperscript{b}Instituto de Matem\'aticas, Universidad Nacional Aut\'onoma de M\'exico, Circuito Exterior, G. U. 04510 D.F., M\'exico}
\end{footnotesize}
\end{center}

\noindent \rule{\textwidth}{0.4pt}
\begin{abstract} \mbox{} \\
This paper is devoted to prove the exact controllability of a system of $N$ one-dimensional coupled wave equations when the control is exerted on a part of the boundary by means of one control. We consider the case where the coupling matrix $A$ has distinct eigenvalues. We give a \emph{Kalman condition} (necessary and sufficient) and give a description, non-optimal in general, of the attainable set. \\

\noindent\emph{Keywords:} Hyperbolic systems, Boundary Controllability, Kalman Rank condition, Divided differences.
\end{abstract}
\noindent \rule{\textwidth}{0.4pt}

\section{Statement of the Problem and Main Results}
This work is devoted to the study of the controllability properties of the following hyperbolic system
\begin{equation}
\left\{ \begin{array}{ll} u_{tt} - u_{xx} + Au = 0, & \text{in $Q = (0,\pi) \times (0,T)$,}\\
u(0,t) = bf(t), \quad u(\pi,t) = 0 & \text{for $t \in (0,T)$,}\\
u(x,0) = u^0(x), \quad u_t (x,0) = u^1(x) & \text{for $x \in (0,\pi)$,} \end{array} \right.
\end{equation}
where $T > 0$ is given, $A \in \mathcal{L} (\R^N)$ is a given matrix, $b$ a given vector from $\R^N$ and $f \in L^2 (0,T)$ is a control function to be determined which acts on the system by means of the Dirichlet boundary condition at the point $x = 0$. The initial data $(u^0, u^1)$ will belong to a Hilbert space $\mathcal{H}$, which is to be specified in our main result. Our goal is to give necessary and sufficient conditions for the exact controllability of System (1) and the space $\mathcal{H}$ where this can be done. 

We recall that System (1) is exactly controllable in $\mathcal{H}$ at time $T$ if, for every initial and final data $(u^0,u^1), (z^0,z^1)$, both in $\mathcal{H}$, there exists a control $f \in L^2 (0,T)$ such that the solution of System (1) corresponding to $(u^0,u^1,f)$ satisfies
\begin{equation}
u(x,T) = z^0 (x), \quad u_t(x,T) = z^1 (x). 
\end{equation}

Due to the linearity and time reversibility of System (1), this is equivalent to exact controllability from zero at time $T$. In other words, System (1) is exactly controllable if for every final state $(z^0,z^1) \in \mathcal{H}$, there exists a control $f \in L^2 (0,T)$ such that the solution $u$ to System (1) corresponding to $f$ satisfies (2) and
\begin{equation}
u(x,0) = 0 = u_t (x,0).
\end{equation}
For this reason, we will assume that $u^0 \equiv 0, u^1 \equiv 0$. 

As of now, the controllability properties of System (1) are well known in the scalar case, i.e. when $N = 1$ (see for example \cite{fattorini1977}). When $N = 1$ and $b \not\equiv 0$, System (1) is exactly controllable in $\mathcal{H} = L^2 (0,\pi) \times H^{-1} (0,\pi)$ if $T \geq T_0 = 2\pi$. 

Most of the known controllability results of (1) are in the case of two coupled equations: see \cite{acd2013, rd2011, bat2017}, 
but the results are for a particular coupling matrix $A$. In the $d$-dimensional situation, that is, for a system of coupled wave equations in a domain $\Omega \subset \R^d$, Alabau-Boussouria and collaborators have obtained several results in the case of two equations for particular coupling matrices (see e.g. \cite{alabau2003, alabau2014, al2011} and the references therein). 

On the other hand, controllability properties of linear ordinary differential systems are well understood. In particular, we have the famous Kalman rank condition (see for example \cite{kfa1969} Chapter 2, p.35). That is, if $N,M \in \N$ with $N,M \geq 1$, $A \in \mathcal{L}(\R^N)$ and $B \in \mathcal{L}(\R^M;\R^N)$, then the linear ordinary differential system $Y' = AY + Bu$ is controllable at time $T > 0$ if and only if
\begin{equation} \label{kalman}
\rank [A \mid B] = \rank [A^{N-1} B, A^{N-2} B, \cdots, B] = N, 
\end{equation}
where $[A^{N-1}B, A^{N-2}B, \cdots, B] \in \mathcal{L}(\R^{MN};\R^N)$. 

Recently, Liard and Lissy \cite{ll2017} gave a general Kalman condition for the internal controllability of $N$ coupled $d$-dimensional wave equations. 

In the framework of parabolic coupled equations, \cite{abgd2011} gives a general Kalman rank condition for the null boundary controllability of $N$ coupled one-dimensional parabolic equations. The aim of this research is to establish general results, as in \cite{abgd2011}, in the case of one-dimensional coupled wave equations.

To state our results, we recall that the operator $-\partial_{xx}$ on $(0,\pi)$ with homogeneous Dirichlet boundary conditions admits a sequence of eigenvalues $\{\mu_k = k^2\}_{k=1}^\infty$ and eigenfunctions $\{\sin kx\}_{k=1}^\infty$. We note that this family of eigenfunctions is a Hilbert basis of $L^2 (0,\pi)$. 

Our main result is the following:

\begin{thm} \label{thm1}
Suppose that $A$ has $N$ distinct eigenvalues $\lambda_1, \ldots, \lambda_N$. Suppose that the following conditions hold:
\begin{enumerate}[(i)]
\item $[A|b]$ satsifies the Kalman rank condition,
\item \[ \mu_k - \mu_l \neq \lambda_i - \lambda_j, \quad \forall k,l \in \N, \forall 1 \leq  i,j \leq N \text{ with $k \neq l$ and $i \neq j$}, \]
\item $T \geq 2N\pi$.
\end{enumerate}
Then System (1)--(3) is exactly controllable in $\mathcal{H} = H^{N-1} (0,\pi;\R^N) \times H^{N-2} (0,\pi;\R^N)$.

Furthermore, if any of (i), (ii), or (ii) is not satisfied, then System (1)--(3) is not approximately controllable. In particular, if (i) or (iii) does not hold, then the codimension of the reachable set of System (1)-(3) in $L^2 (0,\pi;\R^N) \times H^{-1} (0,\pi;\R^N)$ is infinite. On the other hand, if (ii) fails, the sequence $\{k^2 + \lambda_l\}$, $k \in \N$, $l = 1, \ldots, N$, only contains a finite number of multiple points. So the codimension of the reachable set is finite.
\end{thm}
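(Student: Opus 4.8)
The plan is to recast exact controllability as an observability/moment problem and resolve it with the theory of exponential families with divided differences. First I would diagonalize: since $A$ has $N$ distinct eigenvalues, write $A=PDP^{-1}$ with $D=\mathrm{diag}(\lambda_1,\dots,\lambda_N)$, and introduce the dual eigenbasis $\psi_1,\dots,\psi_N$ of $A^{T}$ (so $\psi_i^{T}v_j=\delta_{ij}$ with $v_j$ the columns of $P$). A Vandermonde computation shows that $\rank[A\mid b]=\#\{i:\psi_i\cdot b\neq0\}$, so condition (i) holds exactly when $\psi_i\cdot b\neq0$ for all $i$. From the integration-by-parts identity for System (1) and a transposition/HUM argument, exact controllability in $\mathcal H$ at time $T$ is equivalent to the observability inequality
\[
\norm{(\phi^0,\phi^1)}_{\mathcal H'}^2\le C\int_0^T\abs{b\cdot\phi_x(0,t)}^2\,dt
\]
for the adjoint system $\phi_{tt}-\phi_{xx}+A^{T}\phi=0$, $\phi(0,t)=\phi(\pi,t)=0$, $(\phi,\phi_t)(\cdot,T)=(\phi^0,\phi^1)$, where $\mathcal H'$ is dual to $\mathcal H$ for the pairing $\int_0^\pi(z^1\cdot\phi^0-z^0\cdot\phi^1)$; equivalently one may argue directly through the moment problem for $f$, and I would pass freely between the two formulations.

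Next I would expand $\phi$ in the orthogonal basis $\{(\sin kx)\psi_i\}_{k\ge1,\,1\le i\le N}$: each coefficient solves $\phi_{k,i}''+(k^2+\lambda_i)\phi_{k,i}=0$, hence $\phi_{k,i}(t)=\alpha_{k,i}e^{i\omega_{k,i}t}+\beta_{k,i}e^{-i\omega_{k,i}t}$ with $\omega_{k,i}=\sqrt{k^2+\lambda_i}$ (there are at most finitely many pairs with $k^2+\lambda_i$ real and non-positive, contributing separated real exponentials; complex $\lambda_i$ simply place the $\omega_{k,i}$ in a horizontal strip — in all cases the ensuing analysis is unaffected). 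Then
\[
b\cdot\phi_x(0,t)=\sum_{k,i}k\,(\psi_i\cdot b)\bigl(\alpha_{k,i}e^{i\omega_{k,i}t}+\beta_{k,i}e^{-i\omega_{k,i}t}\bigr),
\]
a nonharmonic Fourier series with frequency set $\Lambda=\{\pm\omega_{k,i}\}$, and $\norm{(\phi^0,\phi^1)}_{\mathcal H'}^2$ is comparable to a weighted $\ell^2$-norm of $(\alpha_{k,i},\beta_{k,i})$. So the whole matter reduces to whether the exponentials with frequencies in $\Lambda$ form a Riesz basis of $L^2(0,T)$ with the appropriate weights.

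The core of the argument — and the step I expect to cause the most trouble — is the analysis of $\Lambda$. Since $\omega_{k,i}=k+\frac{\lambda_i}{2k}+O(k^{-3})$, the frequencies organize into clusters of (eventually exactly) $N$ elements near each integer $k$, with a uniform gap between clusters but interior gaps tending to $0$; condition (ii) is precisely the statement that all the $\omega_{k,i}$ are pairwise distinct (since $k^2+\lambda_i=l^2+\lambda_j$ with $(k,i)\neq(l,j)$ forces $k\neq l$ and $i\neq j$). Because the clusters are not uniformly separated one cannot use the bare exponentials, so within each cluster I would replace them by the generalized divided differences of $\omega\mapsto e^{i\omega t}$ at the nodes $\omega_{k,1},\dots,\omega_{k,N}$: these span the same space, are uniformly well conditioned, and the $m$-th one behaves like $t^{m-1}e^{i\omega_{k,1}t}$ — which is the origin of the loss of $N-1$ derivatives, i.e. of $\mathcal H=H^{N-1}\times H^{N-2}$ rather than $L^2\times H^{-1}$. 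I would then invoke the Riesz-basis theory for exponential families in divided-difference form: $\Lambda$ being a finite union of separated clusters of size at most $N$, its upper density is $N$, and the associated divided-difference family is a Riesz basis of $L^2(0,T)$ exactly when $T\ge2N\pi$, which is condition (iii). Finally, since condition (i) gives $\psi_i\cdot b\neq0$ for every $i$, the map carrying the divided-difference coefficients back to $(\alpha_{k,i},\beta_{k,i})$ is block diagonal, bounded and boundedly invertible (the blocks converging as $k\to\infty$, by distinctness of the $\lambda_i$); assembling these facts gives the observability inequality in $\mathcal H$. The delicate points I foresee here are obtaining the \emph{sharp} time $T=2N\pi$ (the critical-density case) and matching the weights so that the resulting estimate is exactly the $H^{N-1}\times H^{N-2}$ one.

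For the converse statements I would argue separately. If (i) fails, some $(P^{-1}b)_{i_0}=0$, so the $i_0$-th component of $P^{-1}u$ has zero boundary and initial data and hence vanishes identically; the reachable set then lies in a closed subspace missing an entire copy of $L^2(0,\pi)\times H^{-1}(0,\pi)$, so it has infinite codimension and the system is not approximately controllable. If (iii) fails, $T<2N\pi$ is shorter than $2\pi$ times the density $N$ of $\Lambda$, so by the density theory of exponential families $\{e^{i\lambda t}:\lambda\in\Lambda\}$ has infinite excess on $(0,T)$; equivalently the space of adjoint data with $b\cdot\phi_x(0,\cdot)\equiv0$ on $(0,T)$ is infinite-dimensional, and since this space is identified with the annihilator of the closure of the reachable set in $L^2(0,\pi;\R^N)\times H^{-1}(0,\pi;\R^N)$, the codimension is infinite. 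If only (ii) fails, the relation $\abs{k-l}(k+l)=\abs{\lambda_i-\lambda_j}$ leaves only finitely many coincidences $\omega_{k,i}=\omega_{l,j}$ with $i\neq j$; at each such coincidence the two exponentials merge, so only a single linear combination of the two corresponding coefficients (and one for the $-$ frequencies) is observed, which removes a finite, positive number of dimensions — hence, with (i) and (iii) in force, the reachable set has finite nonzero codimension, and in particular the system is not approximately controllable.
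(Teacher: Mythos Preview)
Your proposal is correct and follows essentially the same route as the paper: both diagonalize $A$, reduce to a moment/nonharmonic Fourier problem for the exponential family $\{e^{i\omega_{k,l}t}\}$ with $\omega_{k,l}^2=k^2+\lambda_l$, replace the clustered exponentials by exponential divided differences, and invoke the Riesz-basis theory for EDD to get exact controllability for $T\ge 2N\pi$ with the $N-1$ derivative loss, while the negative parts are handled in the same way (Kalman $\Leftrightarrow$ $\psi_l\cdot b\neq0$; finitely many coincidences when (ii) fails; infinite deficiency when $T<2N\pi$). The only cosmetic difference is that the paper works directly with the moment representation of the solution coefficients $c_{kl}(T)=\tfrac{2k}{\pi}\beta_l\int_0^T f(\tau)e^{i\omega_{kl}(T-\tau)}\,d\tau$ rather than the dual observability inequality, and one small imprecision in your sketch is the claim that the block change of basis from divided-difference to raw coefficients is ``bounded and boundedly invertible with blocks converging'': those blocks in fact carry the factors $\prod_{r\neq j}(\omega_{kj}-\omega_{kr})^{-1}\asymp k^{l-1}$, which is exactly where the weight $k^{N-1}$ (hence $\mathcal H=H^{N-1}\times H^{N-2}$) enters --- you identify this correctly elsewhere, so just make the bookkeeping consistent.
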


\begin{rem}With respect to Theorem \ref{thm1}, we have the following remarks.
\begin{itemize}
\item Conditions (i) and (ii) are also necessary conditions that appear in \cite{abgd2011} for the null controllability of $N$ coupled one-dimensional parabolic equations. The hyperbolicity of the equations in our case requires a minimal control time.
\item In general, the reachable space $\mathcal{H}$ is not optimal. In some particular situations it is possible to give an optimal description of the space. Examples include the cases when $N = 2$ or the coupling matrix is cascade, i.e., when $A$ is triangle inferior, or when $A$ is given in canonical form.
\end{itemize}
\end{rem}


\section{The Fourier Method and Existence of Solutions}
In this section, we introduce the Fourier Method. On the assumptions of Theorem \ref{thm1}, we denote $\varphi_1, \ldots, \varphi_N$ to be the family of eigenvectors of $A$ with corresponding eigenvalues $\lambda_1, \ldots, \lambda_N$. We denote by $\langle \cdot, \cdot \rangle$ the inner product in $\R^N$ and so $A^*$ has eigenvalues $\overline{\lambda_i}$ and eigenvectors $\psi_i$ with
\[ \langle \varphi_i, \psi_j \rangle = \delta_{ij}. \]
Let us define $\Phi_{nj}(x) = \sin (nx) \varphi_j$. Then $\{\Phi_{nj}(x)\}$, $n \in \N$, $j = 1, \ldots, N$, is a Riesz basis in $L^2 (0,\pi;\R^N)$ with biorthogonal family $\{\Psi_{nj}(x)\}$ where
\[ \Psi_{nj}(x) = \dfrac{2}{\pi} \sin (nx) \psi_j. \]

We then represent the solution $u$ of System (1) in the form of the series
\begin{equation} \label{fouriersolution} u(x,t) = \sum_{n,j} a_{nj} (t) \Phi_{nj} (x) \end{equation}
and set
\begin{equation} \label{vxt} v(x,t) = g(t) \Psi_{kl} (x), \end{equation}
where $g(t)$ is a smooth function, i.e., $g \in C_0^2 (0,T)$. Below are standard routine manipulations to solve for the coefficients $a_{nj} (t)$:
\begin{align*}
0 &= \int_0^T \int_0^\pi \langle u_{tt} - u_{xx} + Au, v \rangle dx \: dt \\
&= \int_0^T \int_0^\pi \langle u, v_{tt} - v_{xx} + A^* v \rangle dx \: dt + \int_0^\pi \left[ \langle u_t, v\rangle - \langle u, v_t\rangle \right]_{t = 0}^T dx \\
&\quad - \int_0^T \left[ \langle u_x, v \rangle - \langle u, v_x \rangle \right]_{x = 0}^\pi dt\\
&= \int_0^T  \int_0^\pi \langle u, \ddot{g} \Psi_{kl} + k^2 g \Psi_{kl} + \overline{\lambda_l} g \Psi_{kl} \rangle dx \: dt \\
&\quad - \dfrac{2}{\pi} \int_0^T k \langle b, \psi_l \rangle f(t) g(t) \: dt\\
&= \int_0^T a_{kl} [\ddot{g} + (k^2 + \overline{\lambda_l})g] \: dt - \dfrac{2k}{\pi} \int_0^T \langle b, \psi_l \rangle f(t) g(t) \: dt\\
&= \int_0^T [\ddot{a}_{kl} + (k^2 + \overline{\lambda_l})a_{kl}] g \: dt - \dfrac{2k}{\pi} \langle b, \psi_l \rangle \int_0^T f(t) g(t) \: dt.
\end{align*}
Thus we obtain the equations
\begin{equation} \label{distinctdiffeq}
\ddot{a}_{kl} + (k^2 + \overline{\lambda_l})a_{kl} = \dfrac{2k}{\pi} \langle b, \psi_l \rangle f(t)
\end{equation} 
with zero initial conditions that follow from (3), i.e.
\begin{equation} \label{distinctdiffeqic}
a_{kl} (0) = 0 = \dot{a}_{kl}(0).
\end{equation}
We denote $k^2 + \overline{\lambda_l}$ by $\omega_{kl}^2$ and $\langle b, \psi_l \rangle$ by $\beta_l$. In the formulas below we assume that $\omega_{kl}^2 > 0$. In fact, if $\omega_{kl}^2 < 0$ or if $\omega_{kl}$ is not real, we need to replace trigonometric functions by hyperbolic ones (see e.g. \cite{ai1995} Section 3.2). In the case where $\omega_{kl} = 0$, we will set $\frac{\sin (\omega_{kl}t)}{\omega_{kl}} = t$ (see e.g. \cite{ai1995} Sec. III.1).

The solution of (\ref{distinctdiffeq})--(\ref{distinctdiffeqic}) is given by the formula
\begin{equation} \label{distinctakl}
a_{kl} (t) = \dfrac{2k}{\pi} \beta_l \int_0^t f(\tau) \dfrac{\sin \omega_{kl} (t-\tau)}{\omega_{kl}} \: d\tau.
\end{equation}
By differentiating we obtain
\begin{equation} \label{distinctakldot}
\dot{a}_{kl} (t) = \dfrac{2k}{\pi} \beta_l \int_0^t f (\tau) \cos \omega_{kl} (t-\tau) \: d\tau. 
\end{equation}
We now introduce the coefficients
\begin{equation} \label{distinctckl}
c_{kl} (t) = i \omega_{kl} a_{kl} (t) + \dot{a}_{kl} (t).
\end{equation}
We define $\omega_{-kl} = -\omega_{kl}$, $a_{-kl} = a_{kl}$, and $\dot{a}_{-kl} = \dot{a}_{kl}$ for $k \in \K = \{ \pm 1, \pm 2, \ldots\}$, $l \in \{1, \ldots, N\}$, and rewrite (\ref{distinctakl}) and (\ref{distinctakldot}) in the exponential form:
\begin{equation} \label{distinctckl2}
c_{kl} (t) = \dfrac{2k}{\pi} \beta_l \int_0^t f(\tau) e^{i \omega_{kl} (t-\tau)} \: d\tau.
\end{equation}
Taking into account that $\{\Phi_{nj}\}$ forms a Riesz basis in $L^2 (0,\pi;\R^N)$ and 
\[ |\omega_{kl}| + 1 \asymp k, \: k \in \K, \]
we conclude that 
\begin{equation} \label{ckluut}
\sum_{k \in \K} \dfrac{|c_{kl}(t)|^2}{k^2} \asymp \|u(\cdot,t)\|_{L^2(0,\pi,\R^N)}^2 + \|u_t (\cdot,t)\|_{H^{-1}(0,\pi;\R^N)}^2.
\end{equation}

On the other hand, from the explicit form for $\omega_{kl}$, it follows that for any $t> 0$, the family $\{e^{i\omega_{kl}t}\}$ is either the union of a finite number of Riesz sequences if $t < 2\pi N$ or a Riesz sequence in $L^2(0,t)$ if $t \geq 2\pi N$ (see \cite{ai1995} Section II.4). We recall that a Riesz sequence is a Riesz basis in the closure of its linear span (see \cite{ai1995}). Therefore, from (\ref{distinctckl2}) it follows that for every fixed $t > 0$
\begin{equation} \label{cklf}
\sum_{k.l} \dfrac{|c_{kl}(t)|^2}{k^2} \prec \|f\|_{L^2(0,t)}^2.
\end{equation}
Recall that (\ref{ckluut}) and (\ref{cklf}) refer, respectively, to two-sided and one-sided inequalities with constants independent of the sequences $(c_{kl})$, $(k)$, and of the function $f$. 

Additionally, it is not difficult (see \cite{ai1995} Sec.III.1) to check that
\[ \sum_{k,l} \dfrac{|c_{kl}(t+h) - c_{kl}(t)|^2}{k^2} \to 0, \quad h \to 0. \]

We combine our results in the following theorem.

\begin{thm} \label{thmdistinct}
For any $f \in L^2 (0,T)$, there exists a unique generalized solution $u^f$ of the IBVP (1)--(3) such that
\[ (u^f, u_t^f) \in C([0,T];L^2(0,\pi,\R^N) \times H^{-1} (0,\pi;\R^N)) =: \mathcal{V} \]
and
\[ \|(u^f, u_t^f)\|_\mathcal{V} \prec \|f\|_{L^2(0,T)}. \]
\end{thm}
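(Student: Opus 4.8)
\emph{Overall plan.} The plan is to construct $u^f$ explicitly as the Fourier series $u^f(x,t)=\sum_{n,j}a_{nj}(t)\Phi_{nj}(x)$ with $a_{nj}$ given by (\ref{distinctakl}) (using the hyperbolic or linear replacements when $\omega_{nj}^2\le 0$), to read off its regularity and the claimed estimate from the inequalities (\ref{ckluut}) and (\ref{cklf}), and then to identify this series with the unique function satisfying the transposition (weak) formulation of (1)--(3). For the bound, I would fix $f\in L^2(0,T)$ and $t\in[0,T]$; the upper estimate (\ref{cklf}) holds on $(0,t)$ for every such $t$, since there $\{e^{i\omega_{kl}\cdot}\}$ is at worst a finite union of Riesz sequences, hence a Bessel system, so $\sum_{k,l}|c_{kl}(t)|^2/k^2\prec\|f\|_{L^2(0,t)}^2\le\|f\|_{L^2(0,T)}^2$ with constant independent of $t$. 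By the two-sided equivalence (\ref{ckluut}) — a consequence of $\{\Phi_{nj}\}$ being a Riesz basis of $L^2(0,\pi;\R^N)$ together with $|\omega_{kl}|+1\asymp k$ — the partial sums of the series for $(u^f(\cdot,t),u^f_t(\cdot,t))$ are Cauchy in $L^2(0,\pi;\R^N)\times H^{-1}(0,\pi;\R^N)$, and the limit satisfies the asserted norm bound uniformly in $t\in[0,T]$.

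\emph{Time continuity and the initial data.} To upgrade the uniform-in-$t$ bound to membership in $\mathcal V = C([0,T];L^2(0,\pi;\R^N)\times H^{-1}(0,\pi;\R^N))$, I would apply (\ref{ckluut}) to the difference of two trajectories and use the displayed estimate $\sum_{k,l}|c_{kl}(t+h)-c_{kl}(t)|^2/k^2\to 0$ as $h\to0$; this gives continuity of $t\mapsto(u^f(\cdot,t),u^f_t(\cdot,t))$ into $L^2\times H^{-1}$, whence $\|(u^f,u^f_t)\|_{\mathcal V}\prec\|f\|_{L^2(0,T)}$. Since the zero Cauchy conditions (\ref{distinctdiffeqic}) force $c_{kl}(0)=0$ for all $k,l$, the trajectory emanates from $(0,0)$, consistent with (3).

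\emph{Solution property and uniqueness.} The chain of identities that produced (\ref{distinctdiffeq})--(\ref{distinctdiffeqic}) is reversible: for a test function $v=g(t)\Psi_{kl}(x)$ with $g\in C_0^2(0,T)$ one recovers $\int_0^T\!\!\int_0^\pi\langle u^f,v_{tt}-v_{xx}+A^*v\rangle\,dx\,dt=\frac{2k}{\pi}\beta_l\int_0^T f g\,dt$, and by linearity together with the density of finite linear combinations of the $\Psi_{kl}$ this is exactly the transposition definition of a generalized solution of (1)--(3). For uniqueness, the difference $w$ of two generalized solutions corresponds to $f\equiv 0$ with zero initial data; testing against $v=g\Psi_{kl}$ forces each Fourier coefficient of $w$ to solve $\ddot a_{kl}+\omega_{kl}^2 a_{kl}=0$ with vanishing Cauchy data, hence $a_{kl}\equiv0$ and $w\equiv0$.

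\emph{Main obstacle.} The only genuinely non-elementary input is the harmonic-analysis fact, borrowed from \cite{ai1995}, that $\{e^{i\omega_{kl}t}\}$ is a finite union of Riesz sequences in $L^2(0,t)$ (a single Riesz sequence once $t\ge 2\pi N$), which underlies (\ref{cklf}); granting this and the Riesz-basis property of $\{\Phi_{nj}\}$, the remainder is bookkeeping. The point that needs care is making the transposition formulation precise for $L^2(0,T)$ boundary data — in particular the correct handling of the boundary term $[\langle u_x,v\rangle-\langle u,v_x\rangle]_{x=0}^{\pi}$ — so that both the density step and the uniqueness step are legitimate.
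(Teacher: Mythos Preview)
Your proposal is correct and follows essentially the same approach as the paper: the paper's proof is precisely the content of Section~2 preceding the theorem statement---the Fourier expansion (\ref{fouriersolution}), the derivation of (\ref{distinctdiffeq})--(\ref{distinctckl2}), the two key estimates (\ref{ckluut}) and (\ref{cklf}), and the continuity assertion on $c_{kl}(t+h)-c_{kl}(t)$---which are then ``combined'' into the theorem. You have in fact supplied more detail than the paper on the transposition formulation and on uniqueness, but the underlying argument is the same.
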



\section{Controllability Results}

In this section we will prove Theorem \ref{thm1}. We assume that Conditions (i), (ii), and (iii) are satisfied. By Proposition 3.1 in \cite{abgd2011}, Condition (i) implies that $\beta_l \neq 0$ for all $l = 1, \ldots, N$. We then define $\gamma_{kl}$ to be
\begin{equation} \label{distinctgammakl} \gamma_{kl} := c_{kl}(T) \left( \dfrac{2k}{\pi} \beta_l e^{i \omega_{kl} T} \right)^{-1} \end{equation}
and rewrite (\ref{distinctckl2}) for $t = T$ in the form
\[ \gamma_{kl} = (f, e_{kl})_{L^2 (0,T)}, \]
where $e_{kl}(t) = e^{i \omega_{kl} t}$. We note that
\[ \sum_{k,l} |\gamma_{kl}|^2 \asymp \sum_{k,l} \dfrac{|c_{kl}(T)|^2}{k^2}. \]

We note that for $k$ fixed, the points $\omega_{kl}$ for $l = 1, \ldots, N$ are asymptotically close, i.e., these $N$ points lie inside an interval whose length tends to zero as $k$ tends to infinity. Therefore, the family $\{e_{kl}\}$ is not a Riesz basis in $L^2 (0,T)$ for any $T$. We therefore need to use the so-called exponential divided differences (EDD).

EDD were introduced in \cite{ai2001} and \cite{am2001} for families of exponentials whose exponents are close, that is, the difference between exponents tends to zero. Under precise assumptions, the family of EDD forms a Riesz basis in $L^2 (0,T)$. For each fixed $k$, we define
\[ \tilde{e}_{k1} := [\omega_{k1}] = e^{i\omega_{k1}t}, \]
and for $2 \leq l \leq N$
\[ \tilde{e}_{kl} := [\omega_{k1}, \omega_{k2}, \ldots, \omega_{kl}] = \sum_{j=1}^l \dfrac{e^{i \omega_{kj}t}}{\prod_{r \neq j} (\omega_{kj} - \omega_{kr})}. \]
Under Condition (ii) of our theorem, we are able to use this formula for divided differences as opposed to the formula for generalized divided differences (see e.g. \cite{am2001}). 

From asymptotics theory and the explicit formula for $\omega_{kl}$, it follows that the generating function of the family of EDD $\{\tilde{e}_{kl}\}$ is a sine-type function (see \cite{ai1995, ai2001, am2001}). Hence, the family of EDD $\{\tilde{e}_{kl}\}$ forms a Riesz sequence in $L^2 (0,T)$ for $T \geq 2\pi N$. We then define
\[ \tilde{\gamma}_{kl} = (f, \tilde{e}_{kl})_{L^2 (0,T)}. \]
Since $\{ \tilde{e}_{kl}\}$ is a Riesz sequence, $\{ (\tilde{\gamma}_{kl}) \mid f \in L^2 (0,T)\} = \ell^2$, i.e. any sequence from $\ell_2$ can be obtained by a function $f \in L^2 (0,T)$ and the family $\{ \tilde{e}_{kl}\}$. We note that $| \omega_{kj} - \omega_{ki}| \asymp k^{-1}$, where $1 \leq i,j \leq N$. In particular, this implies that $|\tilde{\gamma}_{kl}| \prec k^{N-1} |\gamma_{kl}|$. Recalling Equations (\ref{distinctckl}) and (\ref{distinctgammakl}), we obtain
\begin{equation} \label{tildegkl}
\{ (\gamma_{kl}) \mid f \in L^2 (0,T)\} \supseteq \ell_{N-1}^2 
\end{equation}
where
\[ \ell_{N-1}^2 = \left\{ (a_{kl}) \mid \sum_{k.l} |k^{N-1} a_{kl}|^2 < \infty \right\}. \]
Since $\{\Phi_{kl}\}$ forms a Riesz basis in $L^2 (0,\pi;\R^N)$, from (\ref{distinctckl}), (\ref{distinctgammakl}), (\ref{tildegkl}), $(u(\cdot,t),u_t (\cdot,t)) \in H^{N-1}(0,\pi;\R^N) \times H^{N-2} (0,\pi,\R^N)$ and we have proved Theorem \ref{thm1}.

We will now prove the negatives results in Theorem \ref{thm1}. We first assume that (i) and (iii) hold, but (ii) does not hold. Observe that this may only happen for a finite number of indices (see \cite{abgd2011}). So we have
\[ k_d^2 - l_d^2 = \lambda_{i_d} - \lambda_{j_d}, \quad 1 \leq d \leq m. \]
In this situation, the family given in (\ref{distinctckl2}), $\{e_{kl}\}$, is clearly linearly dependent since some function (or functions) is repeated twice in the family. Thus, according to Theorems I.2.1e and III.3.10e in \cite{ai1995}, System (1) is not approximately controllable for any $T > 0$. 

Let us now suppose that (i) does not hold. This case is proved directly and is related to properties of exponential families (see \cite{ai1995} Sections I.1 and III.1). 

If condition (iii) is not met, i.e. $T < 2\pi N$, then from \cite{ai2001} and \cite{am2001}, it follows that the family of EDD $\{\tilde{e}_{kl}\}$ is not a Riesz basis in $L^2 (0,T)$. In particular, we can split $\{\tilde{e}_{kl}\}$ into two subfamilies $\mathcal{E}_0$ and $\mathcal{E}_1$ such that $\mathcal{E}_0$ is a Riesz sequence in $L^2 (0,T)$ and $\mathcal{E}_1$ has infinite cardinality. This implies that $\{\tilde{e}_{kl}\}$ is not linearly independent and hence the reachable set has infinite codimension.

Thus we have proved the negative part of Theorem \ref{thm1}, and the proof is complete.


\section{A Particular Case: $N = 2$}

In the previous sections, we proved exact controllability with respect to a more regular space than the space of regularity for the system. This is typical of hybrid systems where clusters of close spectral points appear. However, in the case where $N = 2$, we are able to prove the sharp controllability result, i.e., to prove exact controllability in the space of sharp regularity of the system. To do this, we develop a new method based on the construction of a basis in a so-called asymmetric space. This method was proposed in \cite{ae2018} when investigating the controllability of another hybrid system of hyperbolic type -- the string with point masses. In the present paper, we extend this method to the vector case.

We consider System (1)-(3) with $N = 2$ and
\begin{equation} \label{N2mat} b = \begin{pmatrix} 1 \\ 0 \end{pmatrix}, \quad A = \begin{pmatrix} a_{11} & a_{12} \\ a_{21} & a_{22} \end{pmatrix}.\end{equation}
In other words, the boundary control acts only on the first equation and the second equation is controlled through its connection with the first. From now on, we will refer to this system as $\mathcal{S}_2$. The first question we ask is about the sharp regularity space. We claim that
\[ u_1 (\cdot, t) \in L^2 (0,\pi), \quad u_2 (\cdot, t) \in H^1_0 (0,\pi). \]

From Theorem \ref{thmdistinct}, $(u_1 (\cdot, t), u_2 (\cdot,t)) \in L^2 (0,\pi)^2$. From the structure of the system, $u_2$ is a solution to a wave equation with zero Dirichlet boundary conditions and only depends on $u_1$. In particular, $u_2$ can be solved as a system of linear nonhomogeneous ordinary differential equations. Using standard methods to solve this system yields that $u_2 \in H^1_0 (0,\pi)$. 

The main result of this section is
\begin{thm} \label{thmN2}
Under conditions similar to those of Theorem \ref{thm1}, that is, assume that $A$ has two distinct eigenvalues $\lambda_1$, $\lambda_2$ and $b$ given by (\ref{N2mat}) with $a_{21} \neq 0$ (so the Kalman rank condition for $[A|b]$ is fulfilled), that
\[ \mu_k - \mu_l \neq \lambda_1 - \lambda_2, \quad \forall k,l \in \N, \text{ with $k \neq l$}, \]
and that $T \geq 4\pi$, then the reachable set of System $\mathcal{S}_2$, $\{(u^f (\cdot, T), u_t^f (\cdot,T)) \mid f \in L^2 (0,T)\}$ is equal to $\mathcal{H}_1$ where
\[ \mathcal{H}_1 := \begin{pmatrix} L^2 (0,\pi) \\ H^1_0 (0,\pi) \end{pmatrix} \times \begin{pmatrix} H^{-1} (0,\pi) \\ L^2 (0,T) \end{pmatrix} \]
for $T \geq 4\pi$. 

If $T < 4\pi$, then the reachable set has infinite codimension in $\mathcal{H}_1$. 
\end{thm}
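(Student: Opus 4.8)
The plan is to reduce the vector statement to a scalar "asymmetric space" problem. Writing the solution in the eigenbasis $\varphi_1,\varphi_2$ of $A$, the two components decouple into two scalar nonhomogeneous wave equations with the same control $f$ but shifted frequencies $\omega_{k1}^2=k^2+\lambda_1$ and $\omega_{k2}^2=k^2+\lambda_2$. As in Section 3, the moment problem becomes
\[
\gamma_{kl}=(f,e^{i\omega_{kl}t})_{L^2(0,T)},\qquad k\in\K,\ l=1,2,
\]
and the obstruction to a direct Riesz-basis argument is exactly that for each $k$ the pair $\omega_{k1},\omega_{k2}$ is asymptotically close, with $|\omega_{k1}-\omega_{k2}|\asymp k^{-1}$. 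The point of the $N=2$ refinement over Theorem \ref{thm1} is that instead of replacing the close pair by one "good" exponential and one EDD (which costs a factor $k$ and forces the regularity up by one derivative on \emph{both} components), we keep one exponential, say $e^{i\omega_{k1}t}$, and one divided difference $\tilde e_{k2}=(e^{i\omega_{k1}t}-e^{i\omega_{k2}t})/(\omega_{k1}-\omega_{k2})$, and observe that the family $\{e^{i\omega_{k1}t}\}_{k\in\K}\cup\{\tilde e_{k2}\}_{k\in\K}$ is a Riesz basis in $L^2(0,T)$ for $T\ge 4\pi$ (its generating function is of sine type of exponential type $4\pi$, cf. \cite{ai1995,ai2001,am2001,ae2018}). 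The price is no longer symmetric: $\gamma_{k1}$ is controlled with no loss, while $\gamma_{k1}-\gamma_{k2}$ is controlled with no loss — equivalently $\gamma_{k2}=\gamma_{k1}-(\omega_{k1}-\omega_{k2})\tilde\gamma_{k2}$ gains a factor $k^{-1}$ relative to $\tilde\gamma_{k2}\in\ell^2$.

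Concretely, the steps I would carry out are: (1) Diagonalize: with $\langle u,\psi_l\rangle$-components and $\beta_l=\langle b,\psi_l\rangle$, note $\beta_1,\beta_2\neq0$ (Kalman $\Leftrightarrow a_{21}\neq0$, via Proposition 3.1 of \cite{abgd2011}); set up the moment equalities for $c_{kl}(T)$ exactly as in \eqref{distinctckl2}–\eqref{distinctgammakl}. (2) Change of basis in the observation space: replace the dual pair $(e^{i\omega_{k1}t},e^{i\omega_{k2}t})$ by $(e^{i\omega_{k1}t},\tilde e_{k2})$ and record the corresponding linear change on the coefficients, $(\gamma_{k1},\gamma_{k2})\mapsto(\gamma_{k1},\tilde\gamma_{k2})$ with $\tilde\gamma_{k2}=(\gamma_{k1}-\gamma_{k2})/(\omega_{k1}-\omega_{k2})$. (3) Riesz basis: invoke that $\{e^{i\omega_{k1}t}\}\cup\{\tilde e_{k2}\}$ is a Riesz basis in $L^2(0,T)$ for $T\ge4\pi$, so $\{(f,e^{i\omega_{k1}t}),(f,\tilde e_{k2})\}$ ranges over all of $\ell^2\times\ell^2$ as $f$ ranges over $L^2(0,T)$. (4) Translate the $\ell^2$-condition back through the change of basis and through \eqref{ckluut}: $\tilde\gamma_{k2}\in\ell^2$ is the natural (sharp) summability for the $\varphi_2$-component, and one checks this is precisely $u_2(\cdot,T)\in H^1_0$, $\partial_t u_2(\cdot,T)\in L^2$, while $\gamma_{k1}\in\ell^2$ gives $u_1(\cdot,T)\in L^2$, $\partial_t u_1(\cdot,T)\in H^{-1}$; undoing the eigenbasis change (an invertible bounded operator, possibly $k$-independent, on each Fourier mode) identifies the reachable set with $\mathcal{H}_1$. (5) For $T<4\pi$: the generating function argument fails, $\{e^{i\omega_{k1}t}\}\cup\{\tilde e_{k2}\}$ splits into a Riesz sequence plus an infinite leftover (as in the last paragraph of Section 3), so the closed span has infinite codimension, hence so does the reachable set in $\mathcal{H}_1$.

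The main obstacle I expect is step (4): verifying that the asymmetric $\ell^2$-condition on $(\gamma_{k1},\tilde\gamma_{k2})$ is \emph{exactly} the condition $(u_1,u_2,\partial_tu_1,\partial_tu_2)(\cdot,T)\in \mathcal{H}_1$, with matching norms up to constants. This requires care because the eigenvectors $\varphi_1,\varphi_2$ mix the physical components $u_1,u_2$, so "$\varphi_2$-component in the good space" is not literally "$u_2\in H^1_0$"; one must use the explicit relation $c_{k2}=i\omega_{k2}a_{k2}+\dot a_{k2}$ together with the fact that the second physical equation forces an extra derivative on $u_2$ (the regularity upgrade already noted before the theorem), and then show the two Sobolev indices distribute themselves on the two physical components precisely as in $\mathcal{H}_1$. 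Equivalently, one builds an explicit "asymmetric" Riesz basis of $\mathcal H_1$ adapted to the pair $(L^2,H^1_0)\times(H^{-1},L^2)$ — this is the construction borrowed from \cite{ae2018} — and checks the control map is an isomorphism onto it. The remaining ingredients (sine-type generating function, the $|\omega_{k1}-\omega_{k2}|\asymp k^{-1}$ asymptotics, Proposition 3.1 of \cite{abgd2011}) are quotable or routine.
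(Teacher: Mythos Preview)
Your overall strategy coincides with the paper's: pass to EDD on the exponential side, use that $\{e^{i\omega_{k1}t}\}\cup\{\tilde e_{k2}\}$ is a Riesz basis in $L^2(0,T)$ for $T\ge 4\pi$, and read off the reachable set from the resulting $\ell^2\times\ell^2$ range. You also correctly flag step~(4) as the real issue. What is missing is precisely the device the paper uses to resolve it.

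The paper does \emph{not} argue that ``undoing the eigenbasis change'' preserves the asymmetric space; a generic $k$-independent invertible $2\times 2$ matrix does \emph{not} send $L^2(0,\pi)\times H^1_0(0,\pi)$ to itself, so your parenthetical in step~(4) is exactly where the argument would break. Instead, the paper proves a short but crucial lemma: under the Kalman condition ($a_{21}\neq 0$) both eigenvectors have nonzero second component, so one may normalize them so that
\[
\varphi_1+\varphi_2=\begin{pmatrix}\alpha\\0\end{pmatrix},\qquad \alpha\neq 0.
\]
With this choice the ``dual'' family on the state side becomes
\[
\tilde\Phi_{n1}(x)=\sin(nx)\begin{pmatrix}\alpha\\0\end{pmatrix},\qquad
\tilde\Phi_{n2}(x)=\sin(nx)(\omega_{n2}-\omega_{n1})\,\varphi_2,
\]
which is \emph{lower triangular} in the physical coordinates: the $\ell^2$ coefficient $\tilde a_{n1}$ feeds only into $u_1$, while $\tilde a_{n2}$ carries the factor $(\omega_{n2}-\omega_{n1})\asymp n^{-1}$ and, since $(\varphi_2)_2\neq 0$, controls $u_2$. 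One then chooses $\tilde a_{n2}$ first to hit any target in $H^1_0$ for $u_2$, and afterwards $\tilde a_{n1}$ to hit any target in $L^2$ for $u_1$; the velocity components are handled the same way. Without this normalization, the term $\tilde a_{n1}[(\varphi_1)_2+(\varphi_2)_2]$ would contribute an $\ell^2$ (hence only $L^2$, not $H^1_0$) piece to $u_2$, and your step~(4) would not yield $\mathcal H_1$. So the missing ingredient in your plan is this eigenvector normalization lemma, which is what converts the EDD-side asymmetry into the correct physical-component asymmetry.
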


We will prove this theorem by considering the two possible cases, i.e., whether the matrix $A$ has two distinct eigenvalues or a repeated eigenvalue. 

\begin{proof}
We now return to the representation in (\ref{fouriersolution}):
\begin{equation} \label{fouriersolutionN2}
u(x,T) = \sum_{n,j} a_{nj} (T) \Phi_{nj} (x).
\end{equation}

Taking into account that for $N = 2$, we use EDD of order one, i.e., 
\[ \tilde{a}_{n1} = a_{n1}, \quad \tilde{a}_{n2} = \dfrac{a_{n2}-a_{n1}}{\omega_{n2} -\omega_{n1}}, \]
where we supress the argument $T$. We can rewrite (\ref{fouriersolutionN2}) in the form
\begin{equation} \label{fouriersolutionN22}
u(x,T) = \sum_{n,j} \tilde{a}_{nj} \tilde{\Phi}_{nj} (x).
\end{equation}
It is easy to verify that
\begin{align}
\tilde{\Phi}_{n1} (x) &= \Phi_{n1} (x) + \Phi_{n2} (x) = \sin (nx) (\varphi_1 + \varphi_2), \label{tildephi1}\\
\tilde{\Phi}_{n2} (x) &= \Phi_{n2} (x) (\omega_{n2} - \omega_{n1}) = \sin (nx) \varphi_2 (\omega_{n2} - \omega_{n1}). \label{tildephi2}
\end{align}
We note that $| \omega_{n2} - \omega_{n1}| \asymp n^{-1}$. We present the following lemma.
\begin{lem}
Eigenvectors $\varphi_1$ and $\varphi_2$ can be chosen such that
\begin{equation} \label{phi12} \varphi_1 + \varphi_2 = \begin{pmatrix} \alpha \\ 0 \end{pmatrix}. \end{equation}

\begin{proof}
In particular, we claim that the second component of $\varphi_1$ and $\varphi_2$ are nonzero. If this is true, then by appropriate scaling, we can obtain eigenvectors $\varphi_1$ and $\varphi_2$ whose second components add to zero. Suppose on the contrary that $\varphi_1$ has a zero second component. By scaling, we can assume that 
\[ \varphi_1 = \begin{pmatrix} 1 \\ 0 \end{pmatrix}. \]
By the orthogonality of $\psi_1, \psi_2$, this implies that $\psi_2$ has the form
\[ \psi_2 = \begin{pmatrix} 0 \\ x \end{pmatrix}, \]
for some nonzero $x$. However, this is a contradiction to the Kalman rank condition as 
\[ \left\langle \begin{pmatrix} 1 \\ 0 \end{pmatrix}, \psi_2 \right\rangle = 0. \]
Hence, both $\varphi_1$ and $\varphi_2$ have nonzero second components and the lemma is proved.
\end{proof}
\end{lem}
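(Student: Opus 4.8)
The plan is to prove the lemma by ruling out the degenerate configuration in which one of the two eigenvectors is aligned with the coordinate axis on which the control does \emph{not} act. Concretely, the claim reduces to showing that both $\varphi_1$ and $\varphi_2$ have nonzero second component; once that is established, one simply rescales $\varphi_1$ and $\varphi_2$ so that their second components are equal in magnitude and opposite in sign, and then $\varphi_1 + \varphi_2$ has vanishing second component, which is exactly \eqref{phi12} with $\alpha$ the (nonzero, by linear independence of $\varphi_1,\varphi_2$) sum of the first components.

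For the key step, I would argue by contradiction: suppose, say, $\varphi_1 = \begin{pmatrix} 1 \\ 0 \end{pmatrix}$ after rescaling. The biorthogonality relations $\langle \varphi_i, \psi_j \rangle = \delta_{ij}$ then force $\psi_2$ to be orthogonal to $\varphi_1 = e_1$, hence $\psi_2 = \begin{pmatrix} 0 \\ x \end{pmatrix}$ with $x \neq 0$ (it cannot be zero since $\psi_2$ is an eigenvector). But with $b = \begin{pmatrix} 1 \\ 0 \end{pmatrix}$ we get $\beta_2 = \langle b, \psi_2 \rangle = 0$. By Proposition 3.1 in \cite{abgd2011}, the Kalman rank condition on $[A \mid b]$ is equivalent to $\beta_l \neq 0$ for all $l$; since here $a_{21} \neq 0$ is assumed precisely so that $[A\mid b]$ satisfies the Kalman condition, $\beta_2 = 0$ is a contradiction. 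The same argument applied to $\varphi_2$ shows its second component is nonzero as well.

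I would also note the small point that once both second components are nonzero, the rescaled vectors $\varphi_1, \varphi_2$ with $\varphi_1+\varphi_2 = \begin{pmatrix}\alpha\\0\end{pmatrix}$ still have $\alpha \neq 0$: otherwise $\varphi_1 = -\varphi_2$, contradicting linear independence of eigenvectors belonging to distinct eigenvalues $\lambda_1 \neq \lambda_2$. This keeps $\widetilde{\Phi}_{n1}$ a genuinely nonzero (and asymptotically normalized) element, which is what the subsequent Riesz-basis argument in the asymmetric space needs.

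The only mild obstacle is bookkeeping: making sure the rescaling of $\varphi_1,\varphi_2$ is carried out consistently with the already-fixed dual basis $\psi_1,\psi_2$ (rescaling $\varphi_j$ forces the reciprocal rescaling of $\psi_j$), and confirming that the invocation of \cite{abgd2011} is legitimate here even though that reference is in the parabolic setting — but the equivalence ``Kalman rank condition $\iff$ $\langle b,\psi_l\rangle \neq 0$ for all $l$'' is a purely linear-algebraic statement about $(A,b)$ with $A$ having simple spectrum, so it transfers verbatim. No analytic input is required for this lemma.
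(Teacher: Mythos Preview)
Your argument is correct and follows essentially the same route as the paper: show by contradiction that each $\varphi_j$ has nonzero second component (using biorthogonality to force $\psi_2=(0,x)^T$ and then $\langle b,\psi_2\rangle=0$, violating Kalman), and then rescale. Your write-up is in fact slightly more careful than the paper's, since you make explicit that $\alpha\neq 0$ and that the dual basis must be rescaled reciprocally.
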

We can now express (\ref{fouriersolutionN22}) as
\[ u(x,T) = \sum_n \sin (nx) \left[ \tilde{a}_{n1} \begin{pmatrix} \alpha \\ 0 \end{pmatrix} + \tilde{a}_{n2} \begin{pmatrix} \beta \\ \gamma \end{pmatrix} (\omega_{n2} - \omega_{n1}) \right]. \]
We note that it is clear that $\gamma \neq 0$. 

We recall that $(\tilde{a}_{n1})$ and $(\tilde{a}_{n2})$ may be arbitrary $\ell^2$ sequences (when $f$ runs over $L^2 (0,T)$). Taking into account that $\{ \sin (nx)\}$ is an orthogonal basis in $L^2 (0,\pi)$, we begin by choosing the second component of $u(x,T)$ to be any target function from $H^1_0 (0,\pi)$, and thereby choosing $\tilde{a}_{n2}$ (recalling that $|\omega_{n2} - \omega_{n1}| \asymp n^{-1}$). 

After choosing $\tilde{a}_{n2}$, we can then choose $\tilde{a}_{n1}$ so that the first component of $u(x,T)$ will coincide with any prescribed function from $L^2 (0,\pi)$. We can treat $u_t (x,T)$ in a similar fashion. This is due to the relation of sine and cosine and their appearance in $u (x,T)$ and $u_t(x,T)$. It is this relation that allows us to obtain controllability in time $T \geq 4\pi$. Thus, one of the cases for the positive part of Theorem \ref{thmN2} is proved. We note that the negative part of the theorem can be proved similar to Theorem \ref{thm1}. 
\end{proof}

As a result of this, we have the following corollary.
\begin{cor}
The family $\{\tilde{\Phi}_{nj}\}$ constructed in (\ref{tildephi1})--(\ref{phi12}) forms a Riesz basis in the asymmetric space $L^2 (0,\pi) \times H^1 (0,\pi)$.
\end{cor}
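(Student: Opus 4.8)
The plan is to exhibit $\{\tilde\Phi_{nj}\}$ as the image of an orthonormal basis of the target space under a bounded, boundedly invertible operator; by the standard characterization of Riesz bases this is exactly what must be shown. Throughout, $n\in\N$ and $j\in\{1,2\}$, and I read the asymmetric space as $\mathcal{K}:=L^2(0,\pi)\times H^1_0(0,\pi)$ (the closed span of the family inside $L^2(0,\pi)\times H^1(0,\pi)$; on the second factor the full $H^1$ norm and the seminorm $\|\cdot'\|_{L^2}$ are equivalent, so it is harmless to use the latter). First I would record that, after fixing normalization constants, the functions
\[ E_{n1} := \sqrt{\tfrac{2}{\pi}}\,\begin{pmatrix}\sin nx\\ 0\end{pmatrix}, \qquad E_{n2} := \tfrac{1}{n}\sqrt{\tfrac{2}{\pi}}\,\begin{pmatrix}0\\ \sin nx\end{pmatrix} \]
form an orthonormal basis of $\mathcal{K}$, since $\{\sin nx\}$ is orthogonal both in $L^2(0,\pi)$ and in $H^1_0(0,\pi)$ with $\|\sin nx\|'_{L^2}\asymp n$.

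Next I would expand each $\tilde\Phi_{nj}$ in this basis. Using the Lemma we write $\varphi_1+\varphi_2=\begin{pmatrix}\alpha\\0\end{pmatrix}$ and $\varphi_2=\begin{pmatrix}\beta\\\gamma\end{pmatrix}$; here $\gamma\neq0$ was already noted, and $\alpha\neq0$ because $\alpha=0$ would force $\varphi_1=-\varphi_2$, contradicting the linear independence of eigenvectors belonging to the distinct eigenvalues $\lambda_1,\lambda_2$. From (\ref{tildephi1})--(\ref{tildephi2}) one gets that $\tilde\Phi_{n1}$ is a fixed nonzero scalar multiple of $E_{n1}$, while $\tilde\Phi_{n2}$ is a linear combination of $E_{n1}$ with coefficient $\asymp(\omega_{n2}-\omega_{n1})\asymp n^{-1}$ and of $E_{n2}$ with coefficient $\asymp n(\omega_{n2}-\omega_{n1})$. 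Hence the transition operator $M$ carrying $\{E_{nj}\}$ to $\{\tilde\Phi_{nj}\}$ is block diagonal in $n$, $M=\bigoplus_{n} M_n$ with
\[ M_n = \sqrt{\tfrac{\pi}{2}}\begin{pmatrix}\alpha & \beta(\omega_{n2}-\omega_{n1})\\ 0 & \gamma\, n(\omega_{n2}-\omega_{n1})\end{pmatrix}. \]

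Then I would analyze the blocks asymptotically. From $\omega_{nl}=\sqrt{n^2+\overline{\lambda_l}}=n+\overline{\lambda_l}/(2n)+O(n^{-3})$ we get $\omega_{n2}-\omega_{n1}=(\overline{\lambda_2}-\overline{\lambda_1})/(2n)+O(n^{-3})$, so $n(\omega_{n2}-\omega_{n1})\to c:=(\overline{\lambda_2}-\overline{\lambda_1})/2\neq0$. Therefore the $M_n$ are uniformly bounded and converge to $\sqrt{\pi/2}\,\mathrm{diag}(\alpha,\gamma c)$; moreover $\det M_n=\tfrac{\pi}{2}\,\alpha\gamma\, n(\omega_{n2}-\omega_{n1})$ is nonzero for every $n$ (since $\omega_{n1}\neq\omega_{n2}$ as $\lambda_1\neq\lambda_2$) and tends to the nonzero limit $\tfrac{\pi}{2}\alpha\gamma c$, whence $\sup_n\|M_n^{-1}\|<\infty$. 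Consequently $M$ is bounded and boundedly invertible on $\mathcal{K}\cong\bigoplus_n\C^2$, and it maps the orthonormal basis $\{E_{nj}\}$ onto $\{\tilde\Phi_{nj}\}$; this proves the Corollary.

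The content is essentially a repackaging of the Lemma and of the controllability argument, so I do not expect a serious obstacle. The only points needing care are the choice of the correct closed subspace of $L^2(0,\pi)\times H^1(0,\pi)$ in which completeness holds, the verification that $\alpha\neq0$, and the uniform invertibility of the blocks $M_n$ — which, as above, reduces entirely to the spectral separation $\omega_{n1}\neq\omega_{n2}$ together with the limit $n(\omega_{n2}-\omega_{n1})\to c\neq0$.
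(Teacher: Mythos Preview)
Your argument is correct. The block-diagonal transition operator $M=\bigoplus_n M_n$ between the orthonormal basis $\{E_{nj}\}$ of $\mathcal K$ and the family $\{\tilde\Phi_{nj}\}$ is set up properly, and the asymptotics $n(\omega_{n2}-\omega_{n1})\to(\overline{\lambda_2}-\overline{\lambda_1})/2\neq0$ together with $\alpha,\gamma\neq0$ give the uniform two-sided bounds on $\|M_n\|$ and $\|M_n^{-1}\|$ that you need. The verification that $\alpha\neq0$ via linear independence of $\varphi_1,\varphi_2$ is also fine.

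The route, however, is not the one the paper takes. The paper's proof is essentially a one-line appeal to what was just done in the proof of Theorem~\ref{thmN2}: there it was shown that every element of $L^2(0,\pi)\times H^1_0(0,\pi)$ admits an expansion $\sum_{n,j}\tilde a_{nj}\tilde\Phi_{nj}$ with $(\tilde a_{nj})\in\ell^2$; uniqueness is then read off from the basis property of $\{\sin nx\}$ and the linear independence of $\varphi_1,\varphi_2$, and the two-sided norm estimate is declared ``clear''. In other words, the paper treats the Corollary as a restatement of the controllability computation, whereas you give a self-contained proof via the standard characterization ``Riesz basis $=$ image of an orthonormal basis under a bounded isomorphism''. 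Your approach has the advantage of being independent of the control-theoretic argument and of making the norm equivalence completely explicit through $\sup_n\|M_n^{\pm1}\|<\infty$; the paper's approach is shorter only because it recycles the preceding proof.
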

\begin{proof}
We have proved that every function from $L^2 (0,\pi) \times H^1 (0,\pi)$ can be represented in the form of a series with respect to the family $\{ \tilde{\Phi}_{nj}\}$ with $\ell^2$ coefficients. Uniqueness of the representation follows from the basis property of $\{\sin (nx)\}$ and linear independence of the eigenvectors $\varphi_1$ and $\varphi_2$. Finally, it is clear that
\[ \| u_1 (\cdot, T)\|_{L^2 (0,\pi)}^2 + \|u_2 (\cdot, T)\|_{H^1 (0,\pi)}^2 \asymp \sum_{n,j} | a_{nj}|^2. \]
\end{proof}

As a remark, the latter sum is equivalent to $\|f\|^2$ where $f$ is the corresponding control to $u(\cdot, T)$ with the minimal norm. This control belongs to the closure of the linear span of $\{e^{i \omega_{nj} t}\}$ in $L^2 (0,T)$.


\section{Open Problems and Further Results}

When the coupling matrix $A$ is in lower triangular form, it is not difficult to generalize the results for coupled hyperbolic equations. That is, it is possible to prove exact controllability under the same assumptions as Theorem \ref{thm1} in the space $\mathcal{H} =\mathcal{H}^0 \times \cdots \times \mathcal{H}^{N-1}$ where $\mathcal{H}^N = H^N (0,\pi) \times H^{N-1} (0,\pi)$. On the other hand, given an arbitrary matrix $A$, if the Kalman rank condition holds, we can obtain a canonical version of the original system and obtain similar results for this \emph{transformed system}. The problem that arises is going back to the original system combines the different components in $\mathcal{H}$ and an optimal description of the controllability space is no longer possible.

While we have proved controllability for this system, we assume that the coupling matrix $A$ has $N$ distinct eigenvalues. It remains to be proved that the system is controllable for a generic matrix $A$, assuming that the Kalman rank condition is satisfied.

It remains an open problem to treat the boundary controllability of $N$ coupled wave equations in $\R^d$. The methods in this paper are not of use in the general situation or when the matrix $A$ depends on $(x,t)$.

When finishing the writing of this paper, \cite{bat2017} was published. In it, the case of two coupled one-dimensional wave equations with first order coupling and a specific coupling matrix $A = A(x)$ was treated.

\section{Acknowledgments}
A significant part of this research was made when  S.~Avdonin visited UNAM supported by
PREI, UNAM, Mexico. He is very grateful to the Department of Mathematics for its hospitality. S.~Avdonin  was also supported in part by NSF grant DMS
1411564 and by the Ministry of Education
and Science of Republic of Kazakhstan under the grant No.
AP05136197. L. de Teresa was supported in part  by PAPIIT-IN102116, UNAM, Mexico.

\bibliographystyle{ieeetr}

\end{document}